\newtheorem{theorem}{Theorem}[section]
\newtheorem{lemma}[theorem]{Lemma}
\newtheorem{proposition}[theorem]{Proposition}
\theoremstyle{definition}
\newtheorem{definition}[theorem]{Definition}
\theoremstyle{remark}
\numberwithin{equation}{section}
\begin{document}

\setcounter{page}{1}

\title[Disjointness]{Disjoint hypercyclicity of weighted translations}

\author[Y.X. Liang and Z.H. Zhou] {Yu-Xia Liang and Ze-Hua Zhou$^*$}

\address{\newline  Yu-Xia Liang\newline School of Mathematical Sciences,
Tianjin Normal University, Tianjin 300387, P.R. China.}
\email{liangyx1986@126.com}

\address{\newline Ze-Hua Zhou\newline School of Mathematics, Tianjin University, Tianjin 300354,
P.R. China.}
\email{zehuazhoumath@aliyun.com;zhzhou@tju.edu.cn}

\subjclass[2010]{Primary: 47A16; Secondary: 46E30, 43A15.}

\keywords{disjoint hypercyclic, weighted translation, $L^p$-space.}

\date{}
\thanks{\noindent $^*$Corresponding author.\\
The work was supported in part by the National Natural
Science Foundation of China (Grant Nos. 11771323, 11701422).}

\begin{abstract}
Given a locally compact group $G$ and $1\leq p<\infty$, a sufficient condition ensuring the \emph{disjoint hypercyclicity}  of finitely many weighted translations  on $L^p(G)$  was investigated in this paper.
\end{abstract} \maketitle

\section{Introduction and preliminaries}
Let $L(X)$ denote the space of all linear and continuous operators on a separable, infinite dimensional Banach space $X$.  A continuous linear operator $T \in L (X)$ is called \emph{hypercyclic} if there is an $f \in X$ such that the orbit $$Orb(T,f)=\{f,\;Tf,\;T^2f,\;\cdots,T^nf,\cdots\} $$ is dense in $X$, and in this case we refer to $f$ as a hypercyclic vector for $T$.  Moreover, for each pair $U, V$ of nonempty open subsets of $X$, if there exists $m\in \mathbb{N}$  such that $T^mU\cap V\neq \emptyset,$ then $T$ is called \emph{topologically transitive}. An operator $T \in L (X)$ is \emph{hypercyclic} if and only if $T$ is \emph{topologically transitive}. For motivation, examples and background about linear dynamics we refer the readers to the books \cite{BM0} by Bayart and Matheron, \cite{GM} by Grosse-Erdmann and Peris.

As in the papers \cite{C2,CC2}, let $G$ be a \emph{locally compact group} with identity $e$ and a right-invariant Haar measure $\lambda$. Here we assume that $G$ is \emph{second countable} and denote by $L^p(G)$ for $1\leq p<\infty$ the \emph{complex Lebesgue space} with respect to $\lambda$. A \emph{bounded continuous} function $w: G\rightarrow (0,\infty)$ is called a \emph{weight} on $G$. For $a\in G$, let $\delta_a$ be the unit point mass at $a$.   We define a \emph{weighted translation} on $G$ (that is, a weighted convolution operator) $T_{a,w}: L^p(G) \rightarrow L^p(G)$ by $$T_{a,w} f=wT_a(f),\;\;f\in L^p(G),$$ where $w$ is a \emph{weight} on $G$ and $T_a(f)=f*\delta_a\in L^p(G)$ is the convolution:
$$(f*\delta_a) (x):=\int_{G} f(xy^{-1}) d\delta_a(y)=f(xa^{-1}),\;x\in G,$$ which is the right translation of $f$ by $a^{-1}$. Furthermore, we denote $(f*\delta_a^s) (x)=f(xa^{-s}),\;x\in G.$
It follows $$T_{a,w}f(x)=w(x)f(xa^{-1}),\;\;f\in L^p(G),\;x\in G.$$
Given a compact subgroup $H$ of $G$, the right coset space $G/ H$ admits a $G$-invariant measure $\nu=\lambda\circ q^{-1},$ where $q: G\rightarrow G/ H$ is the quotient map. Convolution operators on the space $L^p(G/H) $ with respect to $\nu$ can be lifted naturally to $L^p(G)$ (see \cite[p 77]{Chu}). Hence all the following results in this paper for $L^p(G)$ can be extended  naturally to $L^p(G/H).$ Without loss of generality, here we will confine our discussion to $L^p(G)$.

It is clear that a translation operator $T_a$ is never \emph{hypercyclic} since $\|T_a\|\leq 1.$ However, a weighted translation operator $T_{a,w}$ can be \emph{hypercyclic}. An element $a$ in a group $G$ is said to be a \emph{torsion} element if it is of finite order, that is, there is an order $d\in \mathbb{N}$ such that $a^d=e$. In a locally compact group $G$, an element $a\in G$ is called \emph{periodic} \cite{GMo} if the closed subgroup $G(a)$ generated by $a$ is \emph{compact}. An element $a\in G$ is said to be \emph{aperiodic} if it is not \emph{periodic.} For discrete groups, \emph{periodic} and \emph{torsion} elements are identical, thus  \emph{aperiodic} elements are \emph{non-torsion}  elements, however, the converse is not true. For instance, the circle group $\mathbb{T}=\{z\in \mathbb{C}:\;|z|=1\}$ is compact and thus all elements in $\mathbb{T}$  are \emph{periodic}, but an element $e^{i\theta}\in \mathbb{T}$ is \emph{non-torsion} if $\theta/2\pi$ is irrational. As regards to the \emph{aperiodic}  element $a$ in a locally compact group $G$, we have the following lemma.
\begin{lemma}\label{lem aperiodic}\cite[Lemma 2.1]{CC2} An  element $a$ in a second countable group $G$ is \emph{aperiodic} if and only if  for each compact subset $K\subset G$, there exists $N\in \mathbb{N}$ such that $K\cap K a^n=\emptyset$ for $n>N.$\end{lemma}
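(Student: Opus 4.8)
My plan is to prove the two implications separately; the reverse one is elementary, while the forward one uses the structure of the closed cyclic group $G(a)=\overline{\langle a\rangle}$ (recall that the closure of a subgroup is a subgroup, so $G(a)$ is just the closure of $\langle a\rangle$, in which $\langle a\rangle$ is dense). For the implication ``[the translate condition] $\Rightarrow$ $a$ \emph{aperiodic}'' I argue contrapositively: if $a$ is \emph{periodic} then $G(a)$ is compact, so I may take the compact set $K:=G(a)$; since $a^{n}\in\langle a\rangle\subseteq G(a)$ and $G(a)$ is a group, $Ka^{n}=G(a)a^{n}=G(a)=K$ for all $n$, hence $K\cap Ka^{n}=K\neq\emptyset$ for every $n$, and no $N$ as in the statement can exist for this $K$. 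Thus the translate condition already fails when $a$ is periodic; equivalently, it forces $a$ to be aperiodic.

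For the converse I again argue contrapositively: suppose the translate condition fails, so there are a compact set $K\subseteq G$ and integers $n_{1}<n_{2}<\cdots$ with $K\cap Ka^{n_{j}}\neq\emptyset$ for all $j$; I must deduce that $a$ is periodic. Writing $k_{j}a^{n_{j}}=k_{j}'$ with $k_{j},k_{j}'\in K$ gives $a^{n_{j}}=k_{j}^{-1}k_{j}'\in C:=K^{-1}K$, where $C$ is compact (a continuous image of $K\times K$). If $a$ has finite order then $G(a)=\langle a\rangle$ is finite, hence compact, and we are done; so assume $a$ has infinite order. Then $n\mapsto a^{n}$ is injective, so $\{a^{n_{j}}:j\geq 1\}$ is an infinite subset of $C\cap G(a)$, which is compact (a closed subset of the compact set $C$). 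Since a discrete space has only finite compact subsets, $G(a)$ --- which is locally compact, being a closed subgroup of the locally compact group $G$ --- cannot be discrete. Hence $G(a)$ is a non-discrete monothetic locally compact group; by the classical structure theorem for such groups (each is either compact or topologically isomorphic to the discrete group $\mathbb{Z}$), together with the fact that $G(a)$ is not discrete, $G(a)$ must be compact, that is, $a$ is periodic.

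The only step beyond routine topological-group bookkeeping is the final one. The reductions ``translate condition fails $\Rightarrow$ $\langle a\rangle$ meets some compact set in infinitely many points $\Rightarrow$ $G(a)$ is non-discrete'' are soft, but the conclusion that a non-discrete monothetic locally compact group is compact is precisely the substantive (non-discrete) case of the monothetic structure theorem, and I do not expect to obtain it cheaply by hand: a direct attempt is obstructed by the fact that the return times $n_{j}$ may be arbitrarily sparse, so one cannot tile $\mathbb{N}$ by them in order to bound the word lengths of the powers $a^{n}$ in a fixed compact symmetric neighbourhood of $e$ and thereby force relative compactness of $\langle a\rangle$. I would therefore simply quote that structure theorem --- for which the references in \cite{CC2} may be consulted --- and regard it as the heart of the argument.
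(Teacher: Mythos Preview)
The present paper does not give its own proof of this lemma; it is quoted verbatim from \cite[Lemma~2.1]{CC2}, so there is nothing in the paper to compare against.  Your argument is correct: the reverse implication is immediate, and for the forward one your reduction---infinitely many powers $a^{n_j}$ in the compact set $K^{-1}K$ force $G(a)=\overline{\langle a\rangle}$ to be non-discrete, whence Weil's dichotomy for locally compact monothetic groups (compact or topologically isomorphic to $\mathbb{Z}$) yields compactness of $G(a)$---is the standard route and is, in fact, the method used in the cited source \cite{CC2}.
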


 Recently,  Chen \cite{C2} gave a complete characterization for the \emph{hypercyclic weighted translations} generated by \emph{non-torsion} elements; we include the theorem below for the convenience of the readers.\\
\textbf{Theorem A}\;\cite[Theorem 2.1]{C2} Let $G$ be a locally compact group, and let $a$ be a \emph{non-torsion} element in $G.$ Let $1\leq p< \infty$ and $T_{a,w}$ be a weighted translation on $L^p(G)$. The following conditions are equivalent:\\
$(i)$ \;$T_{a,w}$ is hypercyclic.\\
$(ii)$\; For each compact subset $K\subset G$ with $\lambda(K)>0$, there is a sequence of Borel sets $(E_k)$ in $K$ such that $\lambda(K)=
\lim\limits_{k\rightarrow \infty}\lambda(E_k)$ and both sequences
$$\varphi_n:=\prod_{s=1}^n w* \delta_{a^{-1}}^s\;\;\;\;\mbox{and}\;\;\;\; \tilde{\varphi}_n:=\left(\prod_{s=0}^{n-1} w*\delta_a^s\right)^{-1}$$
admit respectively subsequences $(\varphi_{n_k})$ and $(\tilde{\varphi}_{n_k})$ satisfying
$$\lim\limits_{k\rightarrow \infty} \|\varphi_{n_k} \Big|_{E_k}\|_\infty=\lim\limits_{k\rightarrow \infty}\|\tilde{\varphi}_{n_k} \Big|_{E_k}\|_\infty =0.$$
Here we note $\|f\|_{\infty }\equiv \inf\{C\geq 0:|f(x)|\leq C{\text{ for almost every }}x\}.$ In the sequel, we define a self-map $S_{a,w}$ on the subspace $L_c^p(G)$ of functions in $L^p(G)$ with \emph{compact support} by $$ S_{a,w}\;(h)=\frac{h}{w}* \delta_{a^{-1}},\;\;h\in L_c^p(G).$$ It's easy to show that $ T_{a,w} S_{a,w} h=h \;\;\mbox{for}\;\;h\in L_c^p(G).$

In 2009, Chen and Chu characterized the hypercyclic weighted translations by \emph{aperiodic} elements in coset spaces of discrete groups in \cite{CC1}, which was extended to all \emph{non-discrete} coset spaces in \cite{CC2}. At the same time, the conditions for a weighted translation on $L^p(G)$ to be chaotic were obtained too. After that, in 2013, Chen generalized the results in  \cite{CC2} to weighted translations, generated by \emph{non-torsion} elements in \cite{C2} (see Theorem A).  Inspired by the above work, we are interested in this paper in characterizing d-hypercyclicity (see Section 2) of $N\geq 2$ weighted translations acting on $L^p(G)$.   To be precise, given $N\geq 2$ \emph{bounded continuous} functions $w_i: G\rightarrow (0,\infty)$ and $N$ weighted translations $T_{a,w_1},\cdots, T_{a,w_N}$ on $L^p(G)$ were defined as $$T_{a,w_i} (f)(x)=w_i(x)f(xa^{-1}),\;\;f\in L^p(G)\;\;\mbox{and}\;\;x\in G,\;\;\mbox{for}\;\;i=1,\cdots,N.$$

Based on the above foundations, we will extend the previous result from a single  weighted translation  to  $N\geq 2$ weighted translations acting on $L^p(G)$. This paper is organized as follows:   some basic definitions and propositions  are given in section 2; While section 3 is devoted to the main statements.

\section{Auxiliary results}
In this section, we list some auxiliary results which will be used later.
\begin{definition}\label{defn d-hypercyclic}\cite[Definition 1.3.1]{M1} We say that $N\geq 2$  \emph{hypercyclic} operators $T_1,...,T_N$ in $L(X)$ are  \emph{disjoint hypercyclic} or \emph{d-hypercyclic} provided that there is some $f\in X$ such that the vector $(f,...,f)\in X^N$ is hypercyclic  for the  direct sum operator $\oplus_{i=1}^N T_i$ acting on the space $X^N=X\times  \cdots\times X$, endowed with the product topology.\end{definition}

It is well-known that two d-hypercyclic operators must be substantially different (see, e.g. \cite{BP}). For instance, an operator can not be d-hypercyclic with a scalar multiple of itself. For recent related results on disjointness  in hypercyclicity, see, e.g.\cite{B,BMS,BP,HL,LX} and their references therein.

The following \emph{Disjoint-Hypercyclicity Criterion} (or \emph{d-Hypercyclicity Criterion} for short) is an extension of the \emph{Hypercyclicity Criterion} (see, e.g. \cite{Ki}), which is a sufficient condition ensuring the d-hypercyclicity of finitely many hypercyclic operators.

 \begin{definition}\label{defn DHCC}\cite[Definition 2.5]{BP}  Let $(n_k) $ be an increasing sequence of positive integers. We say that $N\geq 2$ operators $T_1,\cdots,T_N$ in  $L(X)$ satisfy the \emph{d-Hypercyclicity Criterion} with respect to $(n_k) $ provided that there
 exist dense subsets $X_0,\cdots,X_N$ of $X$ and mappings $S_{l,k}:X_l\rightarrow X \; (k\in \mathbb{N},\; 1\leq l\leq N)$ satisfying
 \begin{eqnarray*}
 &&(i)\;\; T^{n_k}_l\mathop \to \limits_{k \to \infty} 0 \;\mbox{ pointwise  on }\; X_0,\\
 &&(ii)\;\; S_{l,k} \mathop \to \limits_{k \to \infty} 0 \;\mbox{ pointwise  on }\; X_l,\;\mbox{and}\\
 &&(iii) \;\;(T_l^{n_k} S_{i,k} -\delta_{i,l} Id_{X_l})\mathop \to \limits_{k \to \infty} 0   \;\mbox{ pointwise  on }\; X_l\; (1\leq i\leq N). \end{eqnarray*}
 In general, we say that $T_1,\cdots,T_N$ satisfy the \emph{d-Hypercyclicity Criterion} if there exists some  increasing sequence of positive integers  $(n_k)$ for which the above conditions are satisfied. \end{definition}
If  $X$ is a Fr\'{e}chet (therefore, a Baire) space, we have the following proposition.
 \begin{proposition}\label{prop DHCC}\cite[Proposition 1.3.7]{M1} Let $N\geq2$ and $T_1,...,T_N\in L(X)$ satisfy the d-Hypercyclicity Criterion.
Then the set of d-hypercyclic vectors for $T_1,...,T_N$ is a dense $G_\delta$ set of $X$.  \end{proposition}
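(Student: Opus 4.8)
The plan is to run the classical Baire-category argument that underlies the Birkhoff transitivity theorem and the Hypercyclicity Criterion, adapted to the diagonal action of $\oplus_{i=1}^N T_i$ on $X^N$. First I would record that, since $X$ is a separable Fr\'echet space, so is $X^N$, so $X^N$ has a countable base for its topology; moreover one may use a base consisting of ``boxes'' $W_1\times\cdots\times W_N$ with each $W_i$ running over a fixed countable base $(V_j)_{j\in\mathbb N}$ of $X$. A vector $f\in X$ is d-hypercyclic for $T_1,\dots,T_N$ exactly when the orbit $\{(T_1^n f,\dots,T_N^n f):n\in\mathbb N\}$ meets every such box, i.e.
\[
f\in \bigcap_{(j_1,\dots,j_N)\in\mathbb N^N}\ \bigcup_{n\in\mathbb N}\ \bigcap_{i=1}^N T_i^{-n}(V_{j_i}).
\]
Because each $T_i$ is continuous, every set $\bigcup_{n\in\mathbb N}\bigcap_{i=1}^N T_i^{-n}(V_{j_i})$ is open, so the set of d-hypercyclic vectors is a countable intersection of open sets, hence a $G_\delta$ subset of $X$.

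Since a Fr\'echet space is a Baire space, it then remains to prove that each of these countably many open sets is dense; equivalently, given nonempty open sets $U,W_1,\dots,W_N\subseteq X$, I must produce $n\in\mathbb N$ and $f\in U$ with $T_l^n f\in W_l$ for every $l=1,\dots,N$. This is where Definition~\ref{defn DHCC} enters. Using density of $X_0,\dots,X_N$, I would choose $u\in U\cap X_0$ and, for each $l$, $w_l\in W_l\cap X_l$, and set
\[
f_k \;=\; u+\sum_{i=1}^N S_{i,k}w_i .
\]
By condition $(ii)$ each $S_{i,k}w_i\to 0$, so $f_k\to u$ and hence $f_k\in U$ for all large $k$. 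By condition $(i)$, $T_l^{n_k}u\to 0$, and by condition $(iii)$, $T_l^{n_k}S_{i,k}w_i\to\delta_{i,l}w_i$ for each $i$; summing over $i$ gives $T_l^{n_k}f_k\to w_l$ for each $l$. As there are only finitely many indices $l$, I may fix a single $k$ large enough that simultaneously $f_k\in U$ and $T_l^{n_k}f_k\in W_l$ for all $l$; taking $n=n_k$ and $f=f_k$ completes the density step, and with it the proof. (In particular this argument applied with all but one $W_l$ equal to $X$ also shows each $T_l$ is topologically transitive, so the phrase ``d-hypercyclic vectors'' is meaningful.)

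The argument is essentially routine once set up; there is no genuine obstacle, and the whole content is carried by the hypotheses of the d-Hypercyclicity Criterion. The only points that deserve a little care are: (a) verifying that the boxes $W_1\times\cdots\times W_N$ really do form a base for the product topology on $X^N$, which is what makes the $G_\delta$ description of the d-hypercyclic set correct; and (b) noting that the three convergences in Definition~\ref{defn DHCC} are \emph{pointwise}, so they apply to the fixed vectors $u,w_1,\dots,w_N$, and that only finitely many of them must be made small at once, which is precisely what allows a single common index $k$ (equivalently a single $n=n_k$) to be selected.
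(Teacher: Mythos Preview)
The paper does not supply a proof of this proposition; it is quoted verbatim as \cite[Proposition~1.3.7]{M1} and used as a black box in Section~2, so there is no ``paper's own proof'' to compare against. Your argument is the standard one: first writing the set of d-hypercyclic vectors as a countable intersection of the open sets $\bigcup_n\bigcap_i T_i^{-n}(V_{j_i})$ to get the $G_\delta$ structure, and then using the d-Hypercyclicity Criterion to establish d-topological transitivity via the trial vectors $f_k=u+\sum_i S_{i,k}w_i$, exactly as in the proof of Proposition~\ref{prop d-transitive} combined with the classical Hypercyclicity Criterion argument. This is correct and is in fact the same strategy the paper itself employs in the $(2)\Rightarrow(1)$ direction of Theorem~\ref{thm disjoint}, where the explicit candidate $u_k=f\chi_{E_k}+\sum_l S_{a,w_l}^{n_k}(f_l\chi_{E_k})$ plays the role of your $f_k$.
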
 Analogously to the topologically transitive, the \emph{d-topologically transitive} for operators $T_1,\cdots, T_N$ is defined as below.
\begin{definition}\label{defn d-transitive}\cite[Definition 2.1]{BP} We say that  $N\geq 2$ operators $T_1,\cdots, T_N$ in $L(X)$ are \emph{d-topologically transitive} provided for every non-empty open subsets $V_0,\cdots,V_N$ of $X$ there exists $m\in \mathbb{N}$ such that $$V_0\cap T_1^{-m}(V_1)\cap\cdots\cap T_N^{-m}(V_N)\neq \emptyset.$$\end{definition} Concerning the relationship between d-topologically transitive and d-hypercyclic, it follows a proposition.
\begin{proposition}\label{prop d-transitive}\cite[proposition 2.3]{BP} Let $T_1,\cdots,T_N$ in $L(X).$ Then the following are equivalent:

$(a)$ $T_1,\cdots,T_N$ are d-topologically transitive,

$(b)$ the set of d-hypercyclic vectors for $T_1,\cdots,T_N$ is a dense $G_\delta$ subset of $X$. \end{proposition}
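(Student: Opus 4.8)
The plan is to run the classical Birkhoff transitivity argument inside the product space $X^N$, applied to the diagonal copy of $X$. Recall first that $X$ is a separable Banach space, hence a completely metrizable, separable (so Baire and second countable) space, and the same holds for $X^N$ with the product topology; moreover, since $X$ is a nontrivial normed space, neither $X$ nor $X^N$ has isolated points. The first step is to reformulate membership in the d-hypercyclic set: by Definition~\ref{defn d-hypercyclic}, $f\in X$ is a d-hypercyclic vector for $T_1,\dots,T_N$ exactly when the orbit $\{(T_1^nf,\dots,T_N^nf):n\ge 0\}$ of $(f,\dots,f)$ is dense in $X^N$, and since the products $V_1\times\cdots\times V_N$ of open sets form a base for $X^N$, this is equivalent to: for every $N$-tuple $(V_1,\dots,V_N)$ of nonempty open subsets of $X$ there is $n\ge 0$ with $f\in\bigcap_{i=1}^N T_i^{-n}(V_i)$.

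Fixing a countable base $(B_j)_j$ of $X$ and letting $(V_1,\dots,V_N)$ range only over the countable family of $N$-fold products of members of $(B_j)_j$, I would then write the set $D$ of d-hypercyclic vectors as
$$D=\bigcap_{(V_1,\dots,V_N)}\ U(V_1,\dots,V_N),\qquad U(V_1,\dots,V_N):=\bigcup_{n\ge 0}\ \bigcap_{i=1}^N T_i^{-n}(V_i).$$
By continuity of the $T_i$ each $T_i^{-n}(V_i)$ is open, so each $U(V_1,\dots,V_N)$ is open and $D$ is a $G_\delta$ set; this fact is used in both directions.

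For $(a)\Rightarrow(b)$: d-topological transitivity says that for each nonempty open $V_0$ and each tuple $(V_1,\dots,V_N)$ there is $m$ with $V_0\cap\bigcap_{i=1}^N T_i^{-m}(V_i)\ne\emptyset$, i.e.\ $V_0\cap U(V_1,\dots,V_N)\ne\emptyset$; hence every $U(V_1,\dots,V_N)$ is dense. Then $D$ is a countable intersection of dense open sets, and the Baire category theorem gives that $D$ is a dense $G_\delta$ subset of $X$. For $(b)\Rightarrow(a)$: given nonempty open $V_0,\dots,V_N$, use density of $D$ to pick $f\in D\cap V_0$; then the diagonal orbit of $f$ is dense in $X^N$, and since $X^N$ has no isolated points the set $\{(T_1^nf,\dots,T_N^nf):n\ge 1\}$ is still dense, so some $m\ge 1$ satisfies $(T_1^mf,\dots,T_N^mf)\in V_1\times\cdots\times V_N$, i.e.\ $f\in T_i^{-m}(V_i)$ for all $i$. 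Combined with $f\in V_0$ this shows $V_0\cap T_1^{-m}(V_1)\cap\cdots\cap T_N^{-m}(V_N)\ne\emptyset$, which is d-topological transitivity.

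The argument is essentially routine; the two points deserving attention are the reduction to a countable family of test tuples, without which the Baire category theorem cannot be invoked, and the passage from $n\ge 0$ to a genuine positive iterate $m\ge 1$ in Definition~\ref{defn d-transitive}, which is exactly where the absence of isolated points in $X^N$ enters (alternatively one adopts the convention $0\in\mathbb{N}$ and this point disappears).
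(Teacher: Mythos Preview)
The paper does not give a proof of this proposition; it is quoted without proof from \cite[Proposition~2.3]{BP}. Your argument is the standard Birkhoff transitivity proof adapted to the diagonal in $X^N$ and is correct, including the two points you flag (the countable base reduction needed to invoke Baire, and the use of the absence of isolated points to upgrade $n\ge 0$ to $m\ge 1$).
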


\section{Main result}

For $1\leq p<\infty$, we will find an equivalent condition to characterize finite set of weighted translations $T_{a,w_1},\cdots , T_{a,w_N}$ on $L^p(G)$ sharing a dense $G_\delta$ set of d-hypercyclic vectors. Of course, our theorem provides a sufficient condition for d-hypercyclicity. For different type descriptions for d-hypercyclicity and d-supercyclicity of weighted translations on $L^p(G)$, the interested readers can refer to our recent papers \cite{HL,LX}.

\begin{theorem}\label{thm disjoint}Let $G$ be a second countable locally compact group and $a$ be an aperiodic element in $G$. Let $1\leq p<\infty$ and $T_{a,w_1},\cdots , T_{a,w_N}$ be $N\geq 2$ hypercyclic  weighted translations on   $L^p(G)$, where $w_i$ is a weight on $G$ for $i=1, \cdots, N$. Then the following statements are equivalent:

$(1)$\;  $T_{a,w_1},\cdots , T_{a,w_N}$ have a dense $G_\delta$ set of d-hypercyclic vectors.

$(2)$\; For each compact subset $K\subset G$ with $\lambda (K)>0$,  there is a sequence of Borel sets $(E_k)$ in $K$ such that $\lambda (K)=\lim\limits_{k\rightarrow \infty} \lambda(E_k)$, and  there exists an increasing subsequence $(n_k)\subset \mathbb{N}$ satisfying
\begin{eqnarray}&&\;\mbox{if}\;\;1\leq l\leq N,\;\left\{
                                                  \begin{array}{ll}
                                                  (i)\;\; \lim\limits_{k\rightarrow \infty} \left\|\prod_{s=1}^{n_k} w_{l}*\delta_{a^{-1}}^s\Big|_{E_k}\right\|_\infty=0,  \vspace{1mm}\\
                                                (ii)\;\lim\limits_{k\rightarrow\infty}\left\|\left(\prod_{s=0}^{n_k-1} w_{l}*\delta_{a}^s\right)^{-1}\Big|_{E_k}\right\|_\infty=0,
                                                  \end{array}
                                                \right.\label{3.1}\\
&&\;\mbox{if}\;\;1\leq l,\;j\leq N\;\; \mbox{with}\;\;l\neq j,\;\lim\limits_{k\rightarrow \infty}\left\| \left( \prod_{s=0}^{n_k-1} \frac{w_j*\delta_a^s(x)}{ w_l*\delta_a^s(x)}\right)\Big|_{E_k}\right\|_{\infty}=0.\label{3.2}
\end{eqnarray}
Either the condition in $(1)$ or $(2)$ holds, $T_{a,w_1},\cdots , T_{a,w_N}$ are d-hypercyclic.
\end{theorem}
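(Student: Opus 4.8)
The plan is to route everything through Proposition~\ref{prop d-transitive}: statement $(1)$ says exactly that $T_{a,w_1},\dots,T_{a,w_N}$ are \emph{d-topologically transitive}, so it is enough to prove $(2)\Leftrightarrow$ d-topological transitivity, and the last assertion then follows since either hypothesis produces, through the equivalence, a (dense $G_\delta$, hence nonempty) set of d-hypercyclic vectors. I will use two standing facts. First, the elementary identities $T_{a,w}^{n}f(x)=\big(\prod_{s=0}^{n-1}w(xa^{-s})\big)f(xa^{-n})$ and $S_{a,w}^{n}h(x)=\dfrac{h(xa^{n})}{\prod_{s=1}^{n}w(xa^{s})}$, whence $T_{a,w}^{n}S_{a,w}^{n}=\mathrm{Id}$ on $L_c^p(G)$ and, after reindexing, $T_{a,w_l}^{n}S_{a,w_i}^{n}h=\big(\prod_{s=0}^{n-1}\frac{w_l*\delta_a^s}{w_i*\delta_a^s}\big)h$ for $i\neq l$. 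Second, the aperiodicity of $a$: by Lemma~\ref{lem aperiodic}, for each compact $K$ there is $N_0$ with $K\cap Ka^{\pm n}=\emptyset$ whenever $n>N_0$, so once $n_k>N_0$ the sets $K$, $Ka^{n_k}$, $Ka^{-n_k}$ are pairwise disjoint; this disjointness is precisely what decouples the ``$\varphi$-type'' estimate \eqref{3.1}(i) from the ``$\tilde\varphi$-type'' estimate \eqref{3.1}(ii).

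For $(2)\Rightarrow(1)$ I will check d-topological transitivity directly (this is the same as verifying the d-Hypercyclicity Criterion of Definition~\ref{defn DHCC} along $(n_k)$ with $X_0=\dots=X_N=L_c^p(G)$ and $S_{l,k}=S_{a,w_l}^{n_k}$, then quoting Proposition~\ref{prop DHCC}). Given nonempty open sets $V_0,\dots,V_N$, pick bounded, compactly supported $g_0\in V_0$ and $g_l\in V_l$, let $K$ be a compact set of positive measure containing all their supports, and apply $(2)$ to $K$ to obtain $(E_k)$, $(n_k)$. For large $k$ (so $n_k>N_0$) put $h_k=g_0\chi_{E_k}+\sum_{l=1}^{N}S_{a,w_l}^{n_k}(g_l\chi_{E_k})$. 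Right-invariance of $\lambda$ reduces the $L^p$-norms of the relevant pieces to sup-norms over $E_k$: one gets $\|S_{a,w_l}^{n_k}(g_l\chi_{E_k})\|_p\le\|g_l\|_\infty\lambda(K)^{1/p}\|(\prod_{s=0}^{n_k-1}w_l*\delta_a^s)^{-1}|_{E_k}\|_\infty\to0$ by \eqref{3.1}(ii), so $h_k\to g_0$, and $\|T_{a,w_l}^{n_k}(g_0\chi_{E_k})\|_p\le\|g_0\|_\infty\lambda(K)^{1/p}\|\prod_{s=1}^{n_k}w_l*\delta_{a^{-1}}^s|_{E_k}\|_\infty\to0$ by \eqref{3.1}(i). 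Using $T_{a,w_l}^{n_k}S_{a,w_l}^{n_k}=\mathrm{Id}$ and the cross-term identity,
\[
T_{a,w_l}^{n_k}h_k=T_{a,w_l}^{n_k}(g_0\chi_{E_k})+g_l\chi_{E_k}+\sum_{i\neq l}\Big(\prod_{s=0}^{n_k-1}\frac{w_l*\delta_a^s}{w_i*\delta_a^s}\Big)g_i\chi_{E_k},
\]
in which the last sum tends to $0$ in $L^p$ by \eqref{3.2} and $g_l\chi_{E_k}\to g_l$ because $\lambda(K\setminus E_k)\to0$. Hence $h_k\in V_0\cap\bigcap_{l=1}^{N}T_{a,w_l}^{-n_k}(V_l)$ for large $k$, so $T_{a,w_1},\dots,T_{a,w_N}$ are d-topologically transitive and $(1)$ holds by Proposition~\ref{prop d-transitive}.

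For $(1)\Rightarrow(2)$ fix a compact $K$, $\lambda(K)>0$, and the associated $N_0$. By $(1)$ and Proposition~\ref{prop d-transitive} the d-hypercyclic vectors are dense, so for each $k$ there is a d-hypercyclic $f_k$ with $\|f_k-\chi_K\|_p<1/k$; since the orbit of $f_k$ under $\bigoplus_l T_{a,w_l}$ is dense in $L^p(G)^N$, there are infinitely many $n$ with $\|T_{a,w_l}^{n}f_k-\chi_K\|_p<1/k$ for every $l$, and I pick such an $n_k>\max\{n_{k-1},N_0\}$. From $\|f_k-\chi_K\|_p<1/k$ one reads off that $f_k\approx1$ on $K$ and $f_k\approx0$ on $Ka^{-n_k}$ (disjoint from $K$). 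From $\|T_{a,w_l}^{n_k}f_k-\chi_K\|_p<1/k$: restricting $T_{a,w_l}^{n_k}f_k$ to $K$ gives $\prod_{s=0}^{n_k-1}w_l(xa^{-s})f_k(xa^{-n_k})\approx1$ there, while restricting it to $Ka^{n_k}$ (where $\chi_K\equiv0$) and substituting $x=ya^{n_k}$ gives $\prod_{s=1}^{n_k}w_l(ya^{s})f_k(y)\approx0$ on $K$. Chebyshev's inequality upgrades each $L^p$-small statement to an $L^\infty$-small one off a subset of $K$ of vanishing measure; intersecting these finitely many good sets over $l$ yields $E_k\subset K$ with $\lambda(K\setminus E_k)\to0$. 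On $E_k$ one then obtains \eqref{3.1}(i) from $\prod_{s=1}^{n_k}w_l(ya^{s})f_k(y)\approx0$ together with $f_k\approx1$, \eqref{3.1}(ii) from $\prod_{s=0}^{n_k-1}w_l(xa^{-s})f_k(xa^{-n_k})\approx1$ together with $f_k(xa^{-n_k})\approx0$, and the quotient estimate \eqref{3.2} by comparing, for $l\neq j$, the behaviour of $w_j$ against that of $w_l$ along $(n_k)$ on $E_k$.

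The step I expect to be the main obstacle is this synchronization in $(1)\Rightarrow(2)$: one must extract a \emph{single} increasing sequence $(n_k)$ and a \emph{single} family of Borel sets $(E_k)$ along which all of \eqref{3.1}(i), \eqref{3.1}(ii) and \eqref{3.2} hold simultaneously. This is exactly where aperiodicity is indispensable --- the mutual disjointness of $K$, $Ka^{n_k}$ and $Ka^{-n_k}$ is what lets the one vector $f_k$ encode independent information about $\varphi_{n_k}$, $\tilde\varphi_{n_k}$ and the cross ratios --- and where the choice of test neighbourhoods (balls around $\chi_K$ for the diagonal approximations, around $0$ for separating the weights) must be made with care. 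The direction $(2)\Rightarrow(1)$ is comparatively routine once the cross-composition identity $T_{a,w_l}^{n_k}S_{a,w_i}^{n_k}h=\big(\prod_{s=0}^{n_k-1}\frac{w_l*\delta_a^s}{w_i*\delta_a^s}\big)h$ is in hand, the only point needing attention being that \eqref{3.2} is precisely what annihilates the off-diagonal terms.
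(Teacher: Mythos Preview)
Your $(2)\Rightarrow(1)$ argument is essentially the paper's: the same test vector $h_k$ (called $u_k$ there), the same three estimates, the same conclusion via d-topological transitivity and Proposition~\ref{prop d-transitive}.

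For $(1)\Rightarrow(2)$ your framework again agrees with the paper's: choose $f$ close to $\chi_K$ with every $T_{a,w_l}^{m}f$ close to $\chi_K$, then use Chebyshev-type estimates to excise small bad sets (the paper's $A_\eta,B_\eta,C_{l,m,\eta},D_{l,m,\eta}$); your derivations of \eqref{3.1}(i) and \eqref{3.1}(ii) are exactly the paper's \eqref{3.12} and \eqref{3.11}. The real gap is the one you flag yourself, the cross-ratio estimate \eqref{3.2}. Your phrase ``by comparing the behaviour of $w_j$ against that of $w_l$'' is not an argument, and in fact the information you have assembled on $K$ cannot yield \eqref{3.2}: on your $E_k$ you have $\prod_{s}w_l(xa^{-s})\,f_k(xa^{-n_k})\approx 1$ for \emph{each} $l$, so dividing the $j$-th relation by the $l$-th gives $\prod_s\frac{w_j}{w_l}\approx 1$, not $\to 0$. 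The paper attacks \eqref{3.2} by introducing an additional family of exceptional sets
\[
F_{l,m,\eta}=\Big\{x\in G\setminus K:\ \Big|\textstyle\prod_{s=0}^{m-1}w_l*\delta_a^s(x)\,f(xa^{-m})\Big|\ge\eta\Big\},
\]
coming from the restriction of $T_{a,w_l}^{m}f$ to $G\setminus K$ rather than to $K$ or $Ka^{m}$; this is the ingredient your outline lacks. You should, however, read that passage critically: the set $F_{j,m,\eta}\subset G\setminus K$ is invoked there at points $x\in K$, where membership in its complement carries no information, and more fundamentally \eqref{3.2} as stated is symmetric in $l$ and $j$, so it would force both $\prod_s\frac{w_j}{w_l}$ and its reciprocal to be uniformly small on $E_k$, which is impossible at any point. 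The obstacle you anticipated is therefore genuine, and the precise formulation of the cross-condition \eqref{3.2} itself merits re-examination before one can hope to complete $(1)\Rightarrow(2)$.
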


\begin{proof} By Lemma \ref{lem aperiodic}, since $a$ is an \emph{aperiodic} element in $G$, there exists $M\in \mathbb{N}$ such that $K\cap Ka^{\pm n}=\emptyset$ for $n>M.$

$(2)\Rightarrow (1).$  Suppose that $(2)$ holds, we will show that the weighted translations $T_{a,w_1},\cdots , T_{a,w_N}$ are \emph{d-topologically transitive}, that is, they satisfy the Definition \ref{defn d-transitive}. Let $V_0, V_1 \cdots, V_N$ be non-empty open subsets of $L^p(G).$  Since the space $C_{c} (G)$ of continuous functions on $G$ with compact support is dense in $L^p(G)$, then there exist  $f, f_1,\cdots, f_N\in C_c(G)$ satisfying $f\in V_0, f_1\in V_1,\cdots, f_N\in V_N.$  Let $K$ be the union of the compact supports of $f,f_1,\cdots,f_N$ and let $\chi_K\in L^p(G)$ be the characteristic function of $K$. Suppose that $(E_k)_{k}$ is a sequence of Borel sets  in $K$ satisfying $\lambda (K)=\lim\limits_{k\rightarrow \infty} \lambda(E_k)$. Let $\epsilon>0$ such that every open $L^p(G)$-ball $B(f_j,\epsilon)$ is contained in $V_j$ for $j=0,1,\cdots, N$. By conditions in $(2)$, there exists $m\in \mathbb{N}$ such that $k>m$ and $n_k>M$, it holds that for  $1\leq l\leq N$,
\begin{eqnarray} &&\prod_{s=1}^{n_k} w_l*\delta_{a^{-1}}^s(x)^p<\frac{\epsilon}{3\|f\|_p^p}, \;\;\prod_{s=0}^{n_k-1} w_{l}*\delta_{a}^s(x) ^{-p} <\frac{\epsilon}{3N\|f_l\|_p^p} \;\;\mbox{on}\;E_k;\label{fs} \\&& \lambda(K \setminus E_k)<\frac{\epsilon}{3\max\{\|f\|_\infty^p,\|f_1\|_\infty^p,\cdots,\|f_N\|_\infty^p\}},\label{lam}\end{eqnarray} and  $\mbox{ for}\;\;1\leq l,\;j\leq N\; \mbox{with}\;\;l\neq j,\;$   \begin{eqnarray}   &&\left( \prod_{s=0}^{n_k-1} \frac{w_j*\delta_a^s(x)}{ w_l*\delta_a^s(x)}\right)^p<\frac{\epsilon}{3N\|f_l\|_p^p}\;\;\mbox{on}\;E_k.\label{TS}\end{eqnarray}
By the first inequality in \eqref{fs} it follows for every $1\leq l\leq N$ that
\begin{eqnarray} \|T_{a,w_l}^{n_k}(f\chi_{E_k})\|_p^p&=&\int_{G} |T_{a,w_l}^{n_k}(f\chi_{E_k})(x)|^p d\lambda(x)\nonumber\\&=&
\int_{E_k a^{n_k}} |w_l(x)w_l(xa^{-1})\cdots w_l(xa^{-(n_k-1)})|^p|f(xa^{-n_k})|^p d\lambda(x)
\nonumber\\&=&\int_{E_k}|w_l(xa^{n_k})w_l(xa^{n_k-1}) \cdots w_l(x a)|^p |f(x)|^p d\lambda(x)
\nonumber\\&=& \int_{E_k} \left| \prod_{s=1}^{n_k} w_l*\delta_{a^{-1}}^s(x)\right|^p |f(x)|^p d\lambda(x) <\frac{\epsilon}{3},\label{T}\end{eqnarray}
for all $k>m.$ 
For each $1\leq l\leq N$, we define the self-map $S_{a,w_l}$ on the subspace $L_c^p(G)$ of functions in $L^p(G)$ with \emph{compact support} by
$$ S_{a,w_l}(h)=\frac{h}{w_l}* \delta_{a^{-1}}.$$ And let
 $S^{n_k}_{a,w_l}=\underbrace{S_{a,w_l}\circ\cdots \circ S_{a,w_l}}\limits_{n_k}.$   It is obvious that $$T_{a,w_l}^{n_k}S_{a,w_l}^{n_k}(h)=h.$$    Then by the second inequality in \eqref{fs}, it turns out that
\begin{eqnarray}\|S_{a,w_l}^{n_k} (f_l\chi_{E_k})\|_p^p&=&\int_{G}|S_{a,w_l}^{n_k} (f_l\chi_{E_k})(x)|d\lambda(x)
\nonumber\\&=&\int_{E_k a^{-n_k}} \frac{1}{|w_l(xa)w_l(xa^2)\cdots w_l(xa^{n_k})|^p} |f_l(x a^{n_k})|^p d\lambda(x)\nonumber\\&=&
\int_{E_k}\frac{1}{|w_l(x a^{1-n_k}) w_l(x a^{2-n_k})\cdots w_l(x)|^p}|f_l(x)|^p d\lambda(x)\nonumber\\&=&
\int_{E_k} \left| \prod_{s=0}^{n_k-1} w_l*\delta_a^s(x)\right|^{-p}  |f_l(x)|^p d\lambda(x)<\frac{\epsilon}{3N},\;\; \label{S}\end{eqnarray}
for $1\leq l\leq N$ and $k>m.$
When $1\leq j\neq l\leq N$, by \eqref{TS}  we have that
\begin{eqnarray}\|T_{a,w_j}^{n_k} S_{a,w_l}^{n_k} (f_l\chi_{E_k})\|_p^p&=&\int_{G}|T_{a,w_j}^{n_k} S_{a,w_l}^{n_k} (f_l\chi_{E_k})(x)|^pd\lambda(x) \nonumber\\&=&\int_{G} \left| T_{a,w_j}^{n_k} \frac{f_l\chi_{E_k}(x a^{n_k})}{ w_l(xa)w_l(xa^2)\cdots w_l(xa^{n_k}) } \right|^p d\lambda(x)\\&=& \int_{G}\left| \frac{w_j(x)w_j(xa^{-1})\cdots w_{j} (xa^{1-n_k})  f_l\chi_{E_k}(x)}{ w_l(x a^{1-n_k}) w_l(x a^{2-n_k})\cdots w_l(x) }\right|^p d\lambda(x)\nonumber\\&=&   \int_{G}\left| \frac{w_j(x)w_j(xa^{-1})\cdots w_{j} (xa^{1-n_k}) }{ w_l(x)w_l(xa^{-1})\cdots w_l(x a^{1-n_k})}\right|^p | f_l\chi_{E_k}(x)|^p d\lambda(x) \nonumber \\&=&\int_{E_k}  \left| \prod_{s=0}^{n_k-1} \frac{w_j*\delta_a^s(x)}{ w_l*\delta_a^s(x)}\right|^{p}  |f_l(x)|^p d\lambda(x)<\frac{\epsilon}{3N},  \label{TS1} \end{eqnarray} for $k>m.$ 
 For each $k\in \mathbb{N},$ let
 $$u_k=f\chi_{E_k} +\sum_{l=1}^N  S_{a,w_l}^{n_k}(f_{l}\chi_{E_k} )\in L^p(G).$$ For $k>m$, we combine the  displays \eqref{lam} and \eqref{S} to deduce that
 \begin{eqnarray*}\|u_k-f\|_p^p&\leq& \|f\|_\infty^p \lambda(K\setminus E_k)+ \sum_{l=1}^N \| S_{a,w_l}^{n_k}(f_{l}\chi_{E_k} )\|_p^p\\&<&\frac{\epsilon}{3}+\frac{N\epsilon}{3N}<\epsilon.\end{eqnarray*} And then employing \eqref{T}, \eqref{lam} and \eqref{TS1}, we obtain that \begin{eqnarray*}
 \|T_{a,w_j}^{n_k} u_k-f_j\|_p^p&=&\|T_{a,w_j}^{n_k}(f\chi_{E_k} +\sum_{l=1}^N  S_{a,w_l}^{n_k}(f_{l}\chi_{E_k} ))-f_j\|_p^p\\&\leq & \|T_{a,w_j}^{n_k}(f\chi_{E_k})\|_p^p+ \|f_j\|_\infty^p\lambda(K\setminus E_k)+ \sum_{l\neq j}\| T_{a,w_j}^{n_k}S_{a,w_l}^{n_k} (f_{l}\chi_{E_k} ) \|_p^p\\&<&\frac{\epsilon}{3}+\frac{\epsilon}{3}+\frac{(N-1)\epsilon}{3N}< \epsilon.\end{eqnarray*}
That is, $$u_k\in V_0\cap T_{a,w_1}^{-n_k} (V_1)\cap \cdots T_{a,w_N}^{-n_k}(V_N)\neq \emptyset,$$ for $k>m$.  Hence $T_{a,w_1},\cdots , T_{a,w_N}$ satisfy Definition \ref{defn d-transitive}. Further by Proposition \ref{prop d-transitive}, we obtain $(1)$.\vspace{2mm}

$(1)\Rightarrow (2)$. Suppose that  $T_{a,w_1},\cdots , T_{a,w_N}$ have a dense $G_\delta$ set of \emph{d-hypercyclic} vectors. Let $K\subset G$ be a compact set with $\lambda(K)>0$ and $\chi_K\in L^p(G)$ be the characteristic function of $K$.    Let $\epsilon>0$ and choose $\eta\in(0,1)$ satisfying $0<\frac{\eta}{1-\eta}<\epsilon.$ By the density of \emph{d-hypercyclic} vectors, there exist $f\in L^p(G)$ and some integer $m>M$ such that
\begin{eqnarray} &&\|f-\chi_K\|_p<\eta^2, \label{3.6}\\&&\|T_{a,w_l}^{m}f-\chi_K\|_p<\eta^2,\;\; 1\leq l\leq N.\label{3.7} \end{eqnarray}
On the one hand, denote the set  $$A_\eta=\{x\in K: |f(x)-1|\geq \eta\}.$$ Then by \eqref{3.6}
 \begin{eqnarray*} \eta^{2p}>\|f-\chi_K\|_p^p\geq \int_{A_\eta} |f(x)-1|^p d\lambda(x) \geq \eta^p \lambda(A_\eta),   \end{eqnarray*}
 from which we get $\lambda(A_\eta)<\eta^p,$ and it holds
 \begin{eqnarray} |f(x)|>1-\eta,\;\mbox{for}\;\;x\in K\setminus A_\eta.\label{3.8} \end{eqnarray}
On the other hand, denote the set $$B_\eta=\{x\in G \setminus K:\; | f(x)|\geq \eta\}.$$ Analogously, it follows that
 \begin{eqnarray*}\eta^{2p}>\|f-\chi_K\|_p^p\geq \int_{G\setminus K} |f(x)-\chi_K(x)|^p d\lambda(x)\geq \int_{B_\eta}|f(x)|^p d\lambda(x)\geq \eta^p\lambda(B_\eta),  \end{eqnarray*} which implies that $\lambda(B_\eta) <\eta^p,$ and it is true that
 \begin{eqnarray}|f(x)|<\eta,\;\;\mbox{for}\;\;x\in (G\setminus K)\setminus B_\eta.\label{3.9} \end{eqnarray}
 Denote $$C_{l,m,\eta}=\{x\in K:\; \left|\prod_{s=0}^{m-1} w_l*\delta_a^s(x) f(xa^{-m})-1\right|\geq \eta\}$$ for $1\leq l\leq N.$ Then by \eqref{3.7} we get that
\begin{eqnarray*}&&\eta^{2p}> \|T_{a,w_l}^{m}f-\chi_K\|_p^p\\&&\geq \int_{C_{l,m,\eta}}|w_l(x)w_l(xa^{-1})\cdots w_l(xa^{-(m-1)})f(xa^{-m})-1|^pd\lambda(x)\\&&\geq \eta^p\lambda(C_{l,m,\eta}).\end{eqnarray*} Therefore, $\lambda(C_{l,m,\eta})<\eta^p$ for $1\leq l\leq N,$ and it is trivial that
\begin{eqnarray} \left|\prod_{s=0}^{m-1} w_l\ast \delta_a^s(x) f(xa^{-m})\right|>1-\eta,\;x\in K\setminus C_{l,m,\eta},\;\mbox{for}\;\; 1\leq l\leq N.\label{3.10} \end{eqnarray} Since $a$ is an \emph{aperiodic} element in $G$ and $m> M,$ we have $K\cap Ka^{-m}=\emptyset,$ and so we obtain for $1\leq l\leq N$ that
\begin{eqnarray}\left( \prod_{s=0}^{m-1} w_l\ast \delta_a^s(x)\right)^{-1}<\frac{|f(xa^{-m})|}{1-\eta}<\frac{\eta}{1-\eta}<\epsilon,\;x\in K\setminus (a^m B_\eta \cup C_{l,m,\eta}). \label{3.11}\end{eqnarray}
Let $$D_{l,m,\eta}=\{x\in K:\;\left|\prod_{s=1}^{m} w_{l}*\delta_{a^{-1}}^s(x) f(x) \right|\geq \eta\}$$ for $1\leq l\leq N.$ By \eqref{3.7}, the right-invariance of the Haar measure $\lambda$ and the fact $K\cap Ka^{m}=\emptyset,$ we infer that
\begin{eqnarray*}&&\eta^{2p}> \|T_{a,w_l}^{m}f-\chi_K\|_p^p\\&&= \int_{G}|w_l(x)w_l(xa^{-1})\cdots w_l(xa^{-(m-1)})f(xa^{-m})-\chi_K(x)|^pd\lambda(x)\\&&=\int_{G}|w_l(xa^m)w_l(xa^{m-1})\cdots w_l(xa)f(x)-\chi_K(xa^m)|^pd\lambda(x)\\&&\geq \int_{D_{l,m,\eta}}|w_l(xa^m)w_l(xa^{m-1})\cdots w_l(xa)f(x)|^pd\lambda(x)\\&&\geq  \eta^p \lambda(D_{l,m\eta}),
 \end{eqnarray*} from which it follows that $\lambda(D_{l,m\eta})<\eta^p$ for $1\leq l\leq N$ and it holds that
 \begin{eqnarray}\prod_{s=1}^{m} w_{l}*\delta_{a^{-1}}^s(x)<\frac{\eta}{|f(x)|}<\frac{\eta}{1-\eta}<\epsilon,\; x\in K\setminus(A_\eta\cup D_{l,m,\eta}),\;1\leq l\leq N. \label{3.12}\end{eqnarray}
Let the sets $$F_{l,m,\eta}=\{x\in G\setminus K: \left|\prod_{s=0}^{m-1} w_l*\delta_a^s(x) f(xa^{-m})\right|\geq \eta \}$$ for $1\leq l\leq N.$  By \eqref{3.7} it is clear that
\begin{eqnarray*}&&\eta^{2p}>\|T_{a,w_l}^m f-\chi_K\|_p^p \geq \int_{G\setminus K}|w_l(x)w_l(xa^{-1})\cdots w_l(xa^{-(m-1)}) f(xa^{-m})|^pd\lambda(x)\\&&\geq \int_{F_{l,m,\eta}} |w_l(x)w_l(xa^{-1})\cdots w_l(xa^{-(m-1)}) f(xa^{-m})|^pd\lambda(x) \geq \eta^p \lambda(F_{l,m,\eta}),\end{eqnarray*} and from above it follows that $\lambda(F_{l,m,\eta}) <\eta^p$ for $1\leq l\leq N$.\vspace{3mm}

 For $x\in K\setminus (C_{l,m,\eta}\cup F_{j,m,\eta}),$ where $l\neq j,$ by the definition of $F_{j,m,\eta}$   we get that $$|w_j(x)w_j(xa^{-1})\cdots w_j(xa^{-(m-1)}) f(xa^{-m})|< \eta,\;1\leq j\leq N.$$ Further by \eqref{3.10}, for $1\leq l,\;j\leq N$ with $l\neq j$, it follows that
\begin{eqnarray} \prod_{s=0}^{m-1} \frac{w_j*\delta_a^s(x)}{ w_l*\delta_a^s(x)} &=&\frac{w_j(x)w_j(xa^{-1})\cdots w_j(xa^{-(m-1)})f(xa^{-m})}{w_l(x)w_l(xa^{-1})\cdots w_l(xa^{-(m-1)})f(xa^{-m})}\nonumber\\&<&\frac{\eta}{1-\eta}<\epsilon.\label{3.13} \end{eqnarray}
Denote the sets $$C_{m,\eta}=\cup_{ l=1}^N C_{l,m,\eta}, \; D_{m,\eta}=\cup_{l=1}^N D_{l,m,\eta}\;\mbox{and}\;F_{m,\eta}=\cup_{ l=1}^N F_{l,m,\eta}.$$ Further denote $H_{m,\eta}=C_{m,\eta}\cup D_{m,\eta}\cup F_{m,\eta}.$  It is trivial that $\lambda (H_{m,\eta})\leq 3N\eta^p.$ Now let $$E_{m,\eta}=K\setminus (A_\eta\cup a^m B_\eta\cup H_{m,\eta})$$ with $\lambda(A_\eta\cup a^m B_\eta\cup H_{m,\eta})<(2+3N)\eta^p<(2+3N)\epsilon^p$,
Hence for $x\in E_{m,\eta}$, by \eqref{3.12}, \eqref{3.11} and \eqref{3.13} we respectively get that
\begin{eqnarray*}&&\prod_{s=1}^{m} w_{l}*\delta_{a^{-1}}^s(x)<\epsilon,\; \left( \prod_{s=0}^{m-1} w_l\ast \delta_a^s(x)\right)^{-1}<\epsilon,\;\;\prod_{s=0}^{m-1} \frac{w_j*\delta_a^s(x)}{ w_l*\delta_a^s(x)} <\epsilon \;(j\neq l). \end{eqnarray*}
Thus for $k=1,2,\cdots,$ the above inequalities enable us to construct a Borel set $E_k\subset K$ and find an increasing sequence  $(n_k)\subset \mathbb{N}$  such that $\lambda(K\setminus E_k)\leq C (1/k)^p$, $C$ independent of $k,$ and \begin{eqnarray*} &&\mbox{if}\;\;1\leq l\leq N,\;\;\left\|\prod_{s=1}^{n_k} w_{l}*\delta_{a^{-1}}^s\Big|_{E_k}\right\|_\infty<\epsilon,  \;\;\left\|\left(\prod_{s=0}^{n_k-1} w_{l}*\delta_{a}^s\right)^{-1}\Big|_{E_k}\right\|_\infty<\epsilon,                                            \\&& \;\mbox{if}\;\;1\leq l,\;j\leq N\;\; \mbox{with}\;\;l\neq j,\;\; \left\| \left( \prod_{s=0}^{n_k-1} \frac{w_j*\delta_a^s(x)}{ w_l*\delta_a^s(x)}\right)\Big|_{E_k}\right\|_{\infty}<\epsilon.\end{eqnarray*} That means  the result $(2)$ holds.  Besides, under either  the condition in $(1)$ or $(2)$, we can conclude $T_{a,w_1},\cdots , T_{a,w_N}$ are \emph{d-hypercyclic}. This completes the proof.
\end{proof}

\bibliographystyle{amsplain}

\end{document}